\documentclass[11pt,oneside,reqno]{amsart}
\usepackage{stylesheet}
\newcommand{\R}{\mathbb{R}}
\newcommand{\Z}{\mathbb{Z}}
\newcommand{\mcR}{\mathcal{R}}

\title{Kirby diagrams for an infinite family of exotic $\R^4$'s}

\author{Siddharth Shrivastava}
\address{The University of Melbourne, Melbourne VIC, Australia}
\email{ssshrivastav@student.unimelb.edu.au}

\begin{document}

\begin{abstract}
    Eli, Hom, and Lidman showed that the manifolds produced by attaching the simplest positive Casson handle $CH^+$ to a slice disc complement of the ribbon knot $T_{2,n}\#T_{2,-n}$ for $n\ge3$ and odd, and removing the boundary, form a countably infinite family of exotic $\R^4$'s. They provided a Kirby diagram for the case $n=3$.
    In this short note, we extend this for $n\ge3$ and odd, and provide Kirby diagrams for two such families of exotic $\R^4$'s, which are then shown to be equivalent. We then generalise these diagrams to a family of exotic $\R^4$'s built using ribbon disc complements of the pretzel knots $P(n,-n,2k)$.
\end{abstract}

\maketitle

\section{Introduction}
Exotic $\R^4$'s are smooth manifolds that are homeomorphic but not diffeomorphic to the standard $\R^4$. This is a phenomenon exclusive to dimension $4$, because for all other $n\ne4$, any smooth manifold homeomorphic to $\R^n$ is also diffeomorphic to the standard $\R^n$ \cite{stallings}. It is known that uncountably many exotic $\R^4$'s exist \cite{taubes}. One way to construct exotic $\R^4$'s involves attaching a Casson handle to a ribbon disc complement and removing the boundary, giving what are called \emph{ribbon} $\R^4$'s (see \cite{demichelisfreedman} and \cite[Chapter 14]{kirby}). 

Eli, Hom, and Lidman \cite{elihomlidman} provided a way to distinguish exotic $\R^4$'s that are in the form of \emph{slice} $\R^4$'s, which are built by attaching a Casson handle to a \emph{slice} disc complement and removing the boundary, by using Gadgil's end Floer homology \cite{gadgil}. This allowed them to produce a countably infinite family of exotic $\R^4$'s. Specifically, they showed that the manifolds $\mcR_n$, obtained by attaching the simplest positive Casson handle $CH^+$ to a slice disc complement of the ribbon knot $T_{2,n}\#T_{2,-n}$ (for $n\ge 3$ and odd) and removing the boundary, are pairwise nondiffeomorphic exotic $\R^4$'s \cite[Corollary 1.2]{elihomlidman}. This result does not depend on the choice of slice disc. That is, we can construct each $\mcR_n$ by using any slice disc for the knot $T_{2,n}\#T_{2,-n}$, and the resulting family of manifolds $\{\mcR_{n}\}_{n=3,5,7,...}$ will be a family of pairwise nondiffeomorphic exotic $\R^4$'s. This is due to the fact that the end Floer homology does not depend on the choice of slice disc, but only on the slice knot. 

To obtain a more concrete understanding of the above families of exotic $\R^4$'s, it can be useful to represent them using Kirby diagrams. Eli, Hom, and Lidman provided a Kirby diagram for one such choice of $\mcR_3$, built using a ribbon disc complement for $T_{2,3}\#T_{2,-3}$ \cite[Figure 1]{elihomlidman}. In this short note, we first extend this for $n\ge3$ and odd, and provide the Kirby diagrams for two such families of exotic $\R^4$'s. 

\begin{theorem}
    Let $\mcR_n$ and $\mcR_n'$ be the interiors of the manifolds shown in \cref{fig:Rn1} and \cref{fig:Rn2} respectively for $n\ge3$ and odd. Then $\{\mcR_n\}_{n=3,5,7,...}$ and $\{\mcR_n'\}_{n=3,5,7,...}$ are each families of pairwise nondiffeomorphic exotic $\R^4$'s.
    \label{thm:1}
\end{theorem}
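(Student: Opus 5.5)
The plan is to reduce the theorem to \cite[Corollary 1.2]{elihomlidman}. That result says that attaching the simplest positive Casson handle $CH^+$ to \emph{any} slice disc complement of $T_{2,n}\#T_{2,-n}$, for $n\ge3$ odd, and taking the interior gives pairwise nondiffeomorphic exotic $\R^4$'s. So it suffices to show that the manifold drawn in \cref{fig:Rn1}, and likewise the one in \cref{fig:Rn2}, is obtained in exactly this way for some slice disc $D_n$ of $T_{2,n}\#T_{2,-n}$.

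First I will build the ribbon disc complements. The knot $T_{2,n}\#T_{2,-n}=K\#-\overline{K}$ has the standard symmetric ribbon disc. Its complement $B^4\setminus\nu D_n$ has a Kirby diagram with one $1$-handle (a dotted circle) and one $2$-handle, since the obvious ribbon presentation has a single band, i.e.\ one ribbon move. I will draw this using the standard algorithm (cf.\ \cite[Chapter 6]{kirby}): put dots on the unknotted minima circles and turn the band into a $0$-framed $2$-handle.

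For the second family I will use a different band. One option is to cut the $n$ half-twists diagonally. Another is to view $T_{2,n}\#T_{2,-n}$ as a $2$-bridge or pretzel-type knot and take a second ribbon move. Either gives a different ribbon disc and hence the second diagram. In each case the meridian of the disc is a small circle $\mu$ in the diagram.

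Next I attach $CH^+$ along $\mu$. I will use the standard description of the simplest positive Casson handle \cite[Chapter 14]{kirby}: an infinite chain in which each stage is a dotted circle, or equivalently a $0$-framed $2$-handle, together with a self-clasped Whitehead-double-type component carrying a positive clasp, with each stage linked into the previous one. The attaching circle of the first kinky handle is $\mu$ with framing $0$, since the disc's normal framing is $0$ for a slice disc. Gluing this onto the ribbon disc complement diagram and then applying isotopies, handle slides, and $1$-handle cancellations should produce exactly \cref{fig:Rn1} and \cref{fig:Rn2}. The $n$-dependence is only in the number of twists, so I will do the moves for general odd $n$. As a sanity check, I will verify that setting $n=3$ in \cref{fig:Rn1} recovers \cite[Figure 1]{elihomlidman}.

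The main obstacle I expect is keeping the Kirby calculus uniform in $n$ while simplifying to the specific pictures. The delicate points are tracking framings through the slides and making sure the clasp sign stays positive, so that the Casson handle really is $CH^+$. Once both diagrams are identified as $CH^+$ attached to a slice disc complement of $T_{2,n}\#T_{2,-n}$, \cite[Corollary 1.2]{elihomlidman} gives the conclusion for each family. This uses the independence from the choice of slice disc noted in the introduction.
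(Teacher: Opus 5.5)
The reduction is the same as the paper's. Everything rests on \cite[Corollary 1.2]{elihomlidman} and its independence of the slice disc. So the only real content is exhibiting each pictured diagram as $CH^+$ attached along a $0$-framed meridian of some ribbon disc complement of $T_{2,n}\#T_{2,-n}$. Your proposal does not carry out this step, and the sketch you give for it is inconsistent with the pictures.

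A single-band ribbon disc has two minima, so its complement has \emph{two} dotted circles and one $2$-handle, not one of each. Indeed, a slice disc complement has the homology of $S^1$, so $\chi=0$ forces one more $1$-handle than $2$-handles. That one-band disc, coming from the $2$-bridge form of $T_{2,n}$, is exactly what yields \cref{fig:Rn2}, i.e.\ your second family rather than your first. After the ribbon move and isotoping the bottom dotted strand to the top, one dotted circle winds $(n-1)/2$ times around the other. Unwinding it transfers these windings to the $0$-framed $2$-handle.

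By contrast, \cref{fig:Rn1} has $n$ dotted circles and $n-1$ $2$-handles, so it must come from a ribbon disc with $n-1$ bands. The paper obtains it by doing $n-1$ ribbon moves on the symmetric diagram of $T_{2,n}\#T_{2,-n}$ (as in \cref{fig:T1Ribbon_a} for $n=5$). These split the knot into $n$ unlinked unknots, and each band becomes a $0$-framed $2$-handle.

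Your plan to reach the figures by ``isotopies, handle slides, and $1$-handle cancellations'' therefore cannot produce \cref{fig:Rn1} from the disc you describe. Isotopies and slides preserve the number of handles and cancellations decrease it. You would have to introduce cancelling pairs, which amounts to running the proof of \cref{thm:2} backwards. Your alternative band (``cut the $n$ half-twists diagonally'') is never matched to either figure, so as written neither diagram has been identified.

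What is missing is the explicit choice of ribbon moves for each family ($n-1$ bands for $\mcR_n$, one band for $\mcR_n'$) and the direct drawing of the resulting complements. Once that is done, no Kirby calculus involving the Casson handle is needed. $CH^+$ is attached in its standard form along a $0$-framed meridian of a dotted circle, so your concerns about tracking framings and the clasp sign through slides do not arise.
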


Here, $\mcR_n$ and $\mcR_n'$ are both constructed using the ribbon knot $T_{2,n}\#T_{2,-n}$ and the Casson handle $CH^+$, but their ribbon disc complements are obtained using different ribbon moves. The case of $n=3$ in \cref{fig:Rn1} yields the same diagram provided in \cite[Figure 1]{elihomlidman}. 

\begin{figure}[ht]
    \centering
    \begin{tikzpicture}
        \node[anchor=center] at (0,0){\includegraphics[scale=0.4]{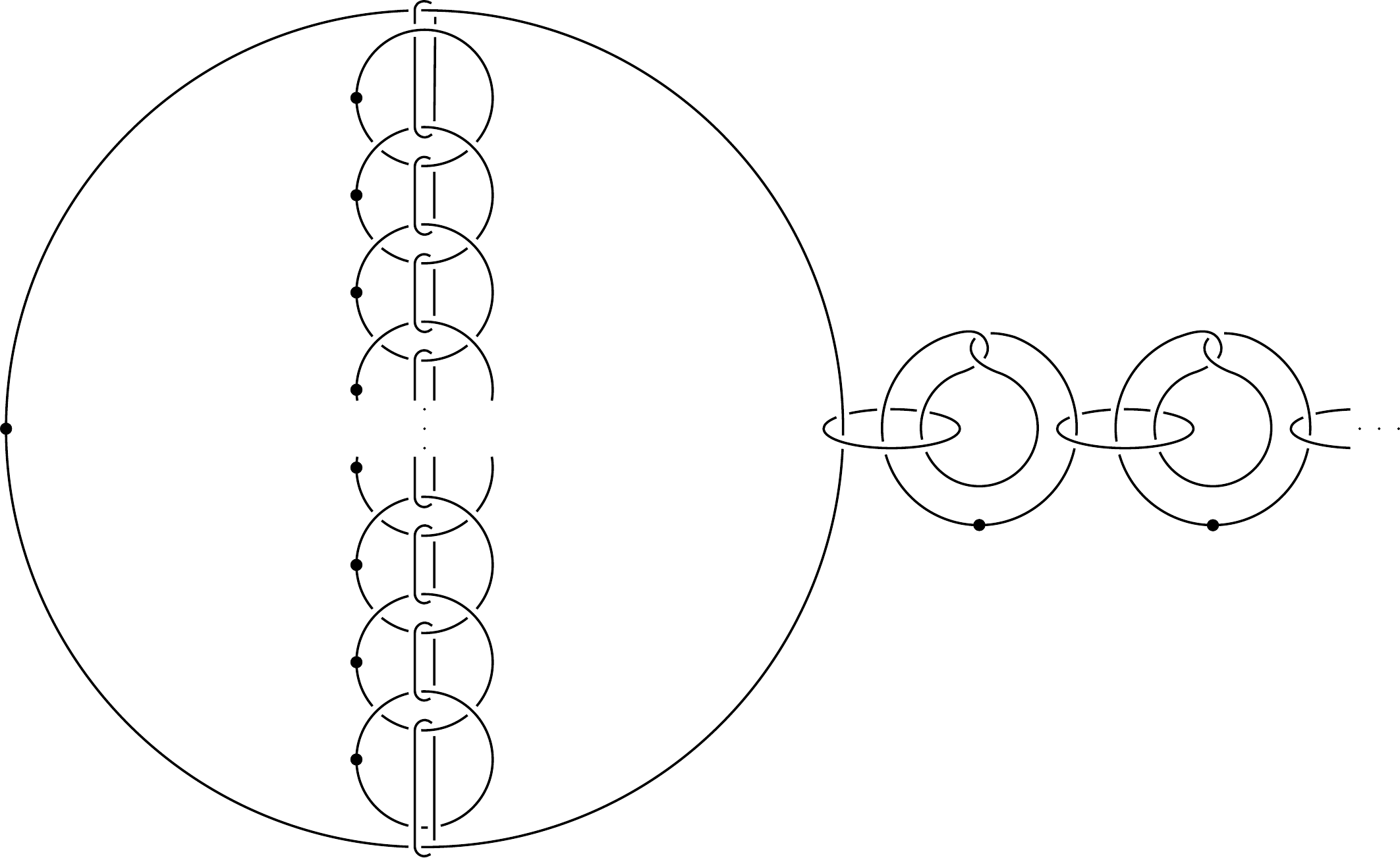}};
        \node at (-2.55, 0.5) {$0$};
        \node at (-2.55, 1.425) {$0$};
        \node at (-2.55, 2.45) {$0$};
        \node at (-2.55, 3.6) {$0$};
        \node at (-2.55, -0.5) {$0$};
        \node at (-2.55, -1.425) {$0$};
        \node at (-2.55, -2.45) {$0$};
        \node at (-2.55, -3.6) {$0$};
        \draw[decorate, decoration={brace}] (-3.8, -4) -- (-3.8, 4);
        \node [align=center] at (-4.6, 0) {$n-1$ \\ dotted \\ circles};
        \node at (1.7, 0.45) {$0$};
        \node at (4.1, 0.45) {$0$};
        \node at (6.5, 0.45) {$0$};
    \end{tikzpicture}
    \caption{A Kirby diagram for $\mcR_n$. The diagram includes a total of $n-1$ dotted circles inside the large dotted circle.}
    \label{fig:Rn1}
\end{figure}

\begin{figure}[ht]
    \centering
    \begin{tikzpicture}
        \node[anchor=center] at (0,0) 
        {\includegraphics[scale=0.4]{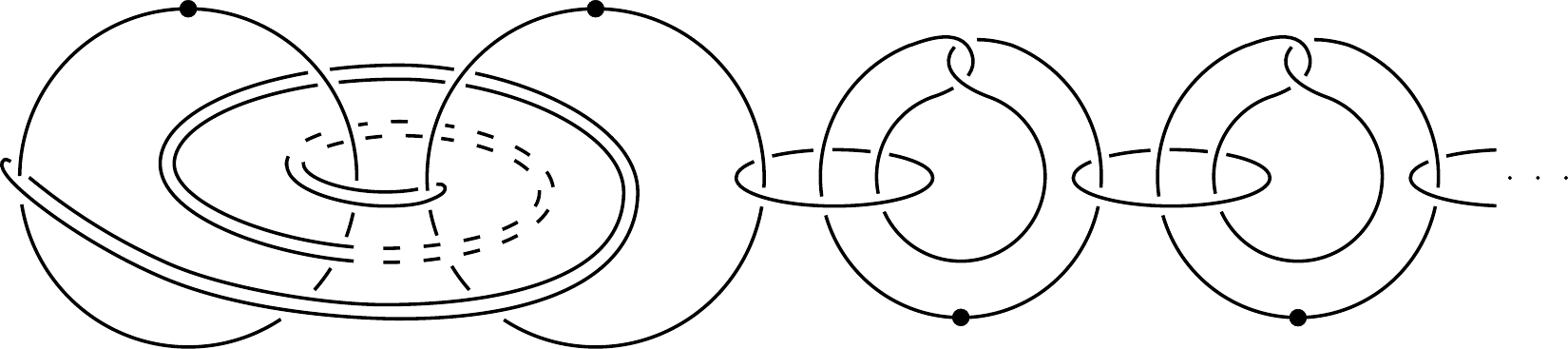}};
        \node at (-2.8,1) {$0$};
        \draw[decorate, decoration={brace}] (-1,-1.4) -- (-4.5,-1.4); 
        \node at (-2.75,-2) {$\dfrac{n-1}{2}$ full windings};
        \node at (0.1,0.4) {$0$};
        \node at (2.5,0.4) {$0$};
        \node at (4.9,0.4) {$0$};
    \end{tikzpicture}
    \caption{A Kirby diagram for $\mcR_n'$. The dashed lines represent $(n-1)/2$ full windings around the inner strands of the two dotted circles.}
    \label{fig:Rn2}
\end{figure}

While at first glance the ribbon disc complements used in the construction of $\mcR_n$ and $\mcR_n'$ (for a fixed $n$) may seem distinct, using Kirby moves, we show that the complements are diffeomorphic. As a result, the two families of Kirby diagrams in \cref{thm:1} represent the same family of exotic~$\R^4$'s.

\begin{theorem}
    The manifolds $\mcR_n$ and $\mcR_n'$ are diffeomorphic for all $n\ge3$ and odd. Thus, $\{\mcR_n\}_{n=3,5,7,...}$ and $\{\mcR_n'\}_{n=3,5,7,...}$ are the same family of exotic $\R^4$'s. 
    \label{thm:2}
\end{theorem}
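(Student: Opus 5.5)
The plan is to reduce the statement to a diffeomorphism between the compact ribbon disc complements, and then extend it over the Casson handle. In both \cref{fig:Rn1} and \cref{fig:Rn2} the diagram splits into two parts. The first is a ribbon disc complement: dotted circles, which are $1$-handles, together with $0$-framed $2$-handles. The second is the same attached tower of $0$-framed $2$-handles, drawn on the right, which records the first stages of $CH^+$ glued along a meridian of the ribbon disc. Write $X_n$ and $X_n'$ for the two disc complements. Since the Casson handle is attached along a meridian of the slice disc with the same framing in both cases, it suffices to find a diffeomorphism $X_n\to X_n'$ with two properties: it carries the attaching curve of the first Casson stage to the corresponding curve, and it respects the framing. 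The rest of the tower is then attached identically, so the diffeomorphism extends to the compact (or infinite) manifolds. Passing to interiors then gives $\mcR_n\cong\mcR_n'$, and the second sentence of the theorem follows from \cref{thm:1}.

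To build the diffeomorphism $X_n\to X_n'$ I would work entirely in Kirby calculus, keeping track of the meridian curve throughout. The diagram for $X_n$ has $n-1$ small dotted circles inside a large one, so it has many $1$-handles. Many of the $0$-framed $2$-handles in it should pass geometrically once over one of the small dotted circles. First I would cancel these $1$-/$2$-handle pairs one at a time. Before each cancellation, I would slide every other strand running through the small dotted circle off it, using the canceling $2$-handle. Each such slide replaces a strand with a parallel copy of the canceling curve, which winds around the large dotted circle. After $n-1$ cancellations (or $(n-1)/2$ pairs of them), the accumulated parallel copies should assemble into twisting of the remaining strands around the inner strands of the surviving dotted circles. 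The remaining diagram would then be two dotted circles and one $0$-framed $2$-handle, which is the shape of \cref{fig:Rn2}. An alternative is to sit at the level of ribbon discs and convert the two band presentations into each other by band slides and isotopies. This would follow from the fact that the two ribbon moves on $T_{2,n}\#T_{2,-n}$ give isotopic discs rel boundary, or discs related by handle moves in the complement. I would use this only as a guide for which slides to perform.

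The main obstacle is the bookkeeping in the middle step. I would need to verify that the windings produced by the slides come out as exactly $(n-1)/2$ full windings around both inner strands, with the correct sign and no leftover clasps. I would also need to check that the framing stays $0$. I would attempt this by induction on $n$. First I would do the case $n=3$ explicitly, where only one winding appears. Then I would show that passing from $n$ to $n+2$ adds two small dotted circles to \cref{fig:Rn1}. Cancelling these two against their $2$-handles should introduce exactly one additional full twist in the dashed region of \cref{fig:Rn2}, while leaving the meridian curve and the Casson tower untouched.
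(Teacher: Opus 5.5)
Your outer framework matches the paper's. You reduce to a diffeomorphism of the disc complements that carries a $0$-framed meridian of the ribbon disc to a $0$-framed meridian (any meridian will do for attaching $CH^+$), and you build it by Kirby calculus. Your basic operation is also the paper's: slide the other strands off a dotted circle over a $2$-handle that crosses it once, then cancel. That is the move in \cref{fig:Move1}, and \cref{fig:Move2} is the same operation viewed as merging two dotted circles.

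The gap is the middle step, which is the whole content of the theorem. You only assert that the slides ``should assemble'' into the right picture, and what you do say about it is off.
\begin{itemize}
\item The count is wrong. The complement in \cref{fig:Rn1} has $n$ one-handles and $n-1$ two-handles, and the one in \cref{fig:Rn2} has two and one. So exactly $n-2$ cancellations are needed; $n-1$ would leave a single dotted circle, that is $S^1\times B^3$. Your ``two cancellations per full winding'' already fails at $n=3$, where there is one cancellation and one winding.
\item Each cancellation reroutes the slid strands along every passage of the cancelling $2$-handle through the other dotted circles. After a few steps, nothing in your plan guarantees that some $2$-handle still meets a remaining dotted circle geometrically once.
\item Even if the cancellations go through, you have not shown that the leftover curve has exactly $(n-1)/2$ clean windings with no extra clasps.
\item Your induction from $n$ to $n+2$ would need the diagram for $n+2$ to contain the one for $n$ compatibly with all the slides, and you have not shown this.
\end{itemize}

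The paper makes this tractable in two ways. First, it redraws \cref{fig:Rn1} as \cref{fig:Rn1Alt}, a ring of $n$ dotted circles and $n-1$ two-handles. It applies the move of \cref{fig:Move1} once, at the rightmost circle. It then applies the move of \cref{fig:Move2} to every remaining $2$-handle but one, in a fixed cyclic order with isotopies in between. That gives $1+(n-3)=n-2$ cancellations.

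Second, and this is the idea your plan lacks, it never matches the windings of \cref{fig:Rn2} directly. The resulting two-circle diagram \cref{fig:KirbyMoves3} is redrawn with $n-2$ half-twists on each side (\cref{fig:KirbyMoves4}), then with $n$ (\cref{fig:KirbyMoves5}). That last picture is the general-$n$ version of \cref{fig:T2Ribbon_c}. \cref{fig:Rn2} was already obtained from that diagram by the unwinding isotopy in the proof of \cref{thm:1}. So the $(n-1)/2$ windings come for free and never have to be tracked through the slides. If you aim your cancellations at that pre-unwinding diagram rather than at \cref{fig:Rn2}, your argument closes.
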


\begin{remark}
    \cref{fig:Rn1Alt} shows an alternative way to draw the Kirby diagram in \cref{fig:Rn1}. This is obtained by isotoping the leftmost strand of the large dotted circle in \cref{fig:Rn1} over the $n-1$ dotted circles, and then attaching the Casson handle $CH^+$.
    \label[remark]{rem:altdiagram}
\end{remark}

\begin{figure}[ht]
    \centering
    \begin{tikzpicture}
        \node[anchor=center] at (0,0) 
        {\includegraphics[scale=0.4]{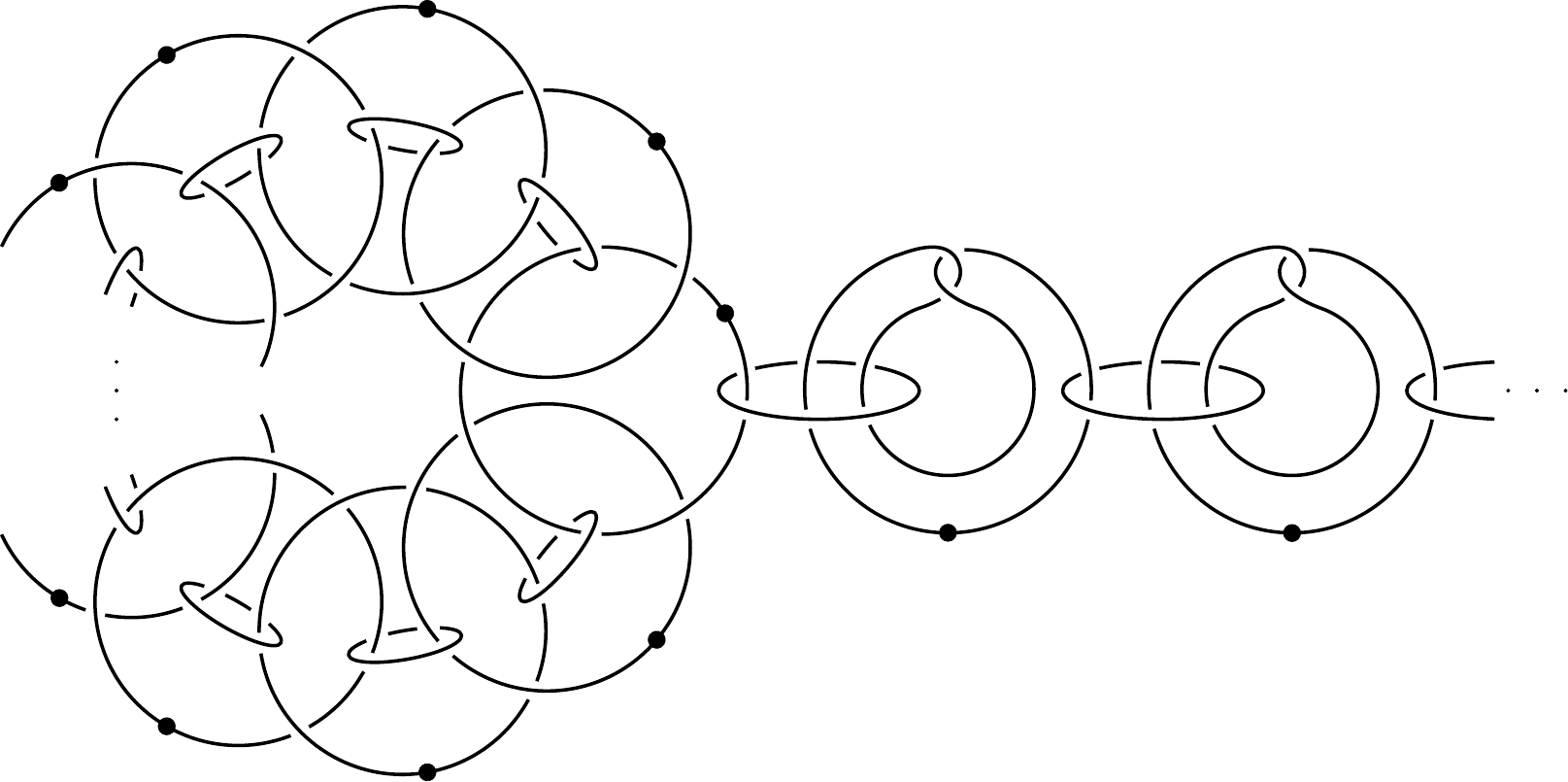}};
        \node at (-1.4,1.4) {$0$};
        \node at (-2.6,2.1) {$0$};
        \node at (-4,1.9) {$0$};
        \node at (-4.9,0.8) {$0$};
        \node at (-1.4,-1.4) {$0$};
        \node at (-2.6,-2.1) {$0$};
        \node at (-4,-1.9) {$0$};
        \node at (-4.9,-0.8) {$0$};
        \draw[decorate, decoration={brace}] (-0.3,-2.8) -- (-5.5,-2.8); 
        \node at (-2.75,-3.1) {$n$ dotted circles};
        \node at (0,0.4) {$0$};
        \node at (2.4,0.4) {$0$};
        \node at (4.8,0.4) {$0$};
    \end{tikzpicture}
    \caption{An equivalent Kirby diagram to \cref{fig:Rn1}. This can be obtained from \cref{fig:Rn1} by isotoping the leftmost strand of the large dotted circle over the $n-1$ dotted circles, and then attaching the Casson handle $CH^+$.}
    \label{fig:Rn1Alt}
\end{figure}

The Kirby diagram in \cref{fig:Rn2} can be generalised by adding twists in the parallel strands of the $2$-handle, as shown in \cref{fig:Rnk}. We will show that such a manifold, with $k$ full twists in the $2$-handle, is also an exotic $\R^4$ and can be constructed by attaching the Casson handle $CH^+$ to a ribbon disc complement of the pretzel knot $P(n,-n,2k)$. We thank Tye Lidman for suggesting this generalisation. 

\begin{theorem}
    Let $\mcR_{n,k}$ be the interior of the manifold shown in \cref{fig:Rnk} for $n\ge3$ and odd, and $k\in\Z$. Then $\mcR_{n,k}$ is an exotic $\R^4$. Furthermore, if $n\ne n'$, then $\mcR_{n,k}$ and $\mcR_{n',k'}$ are not diffeomorphic for all $k,k'\in\Z$.
    \label{thm:3}
\end{theorem}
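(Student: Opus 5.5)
The plan is to identify the handle structure in \cref{fig:Rnk}: it should be a ribbon disc complement $X_{n,k}$ (two $1$-handles drawn as dotted circles plus one $0$-framed $2$-handle) with $CH^+$ attached to a $0$-framed meridian of the knot. The argument is then reduced to the Eli--Hom--Lidman machinery. First, I will show that the dotted-circle part of \cref{fig:Rnk} is the complement of a ribbon disc $D_{n,k}\subset B^4$ for the pretzel knot $P(n,-n,2k)$. This follows the standard recipe: a ribbon disc with two minima and one saddle yields a diagram with one dotted circle per minimum and a $0$-framed $2$-handle for each band. Concretely, $P(n,-n,2k)$ has an obvious ribbon band joining the $n$ and $-n$ columns; cutting it yields a two-component unlink. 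The twist regions become the $(n-1)/2$ full windings around the inner strands of the two dotted circles, exactly as in \cref{fig:Rn2}. The $2k$-twist column becomes $k$ full twists in the parallel strands of the $2$-handle. As a check, setting $k=0$ must recover \cref{fig:Rn2}, since $P(n,-n,0)=T_{2,n}\#T_{2,-n}$. The Casson handle is attached along the meridian of the knot, as in the $n=3$ diagram of \cite{elihomlidman}.

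Second, I will show that $\mcR_{n,k}$ is homeomorphic to $\R^4$. The ribbon disc complement is a homology circle with $\pi_1$ normally generated by the meridian. Attaching a Casson handle along the meridian, by Freedman, gives a contractible manifold whose interior is simply connected at infinity, and hence is homeomorphic to $\R^4$. This is the standard argument for ribbon $\R^4$'s \cite{demichelisfreedman}, \cite[Chapter 14]{kirby}, and \cite{elihomlidman} uses the same argument for any slice disc.

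Third, for exoticness and the distinctness statement, I will invoke the main theorem of \cite{elihomlidman}: Gadgil's end Floer homology of a slice $\R^4$ built with $CH^+$ depends only on the slice knot $K$. Specifically, it is computed from $HF^+$ of surgery on $K$, i.e. essentially from a $d$-invariant or $V_0$-type invariant of $K$ that they extract. I then need that invariant for $P(n,-n,2k)$. The key observation is that $P(n,-n,2k)$ is obtained from $T_{2,n}\#T_{2,-n}$ by a crossing change or twisting along an unknot bounding a disc that hits the knot in two points of opposite orientation. Since this is a linking-number-zero twist, I would argue that the relevant knot Floer invariant used in \cite[Corollary 1.2]{elihomlidman} is unchanged. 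One route is mutation: $P(n,-n,2k)$ and $P(n,2k,-n)$ are mutants. Another is to show that the invariant they use (built from the knot Floer complex via $T_{2,n}$ summands) is determined by a quantity computable from the Alexander polynomial or the genus-one, thin structure. The quantity should depend on $n$ alone, and it is nonzero, which rules out $\R^4$. For $k\neq 0$, $P(n,-n,2k)$ is a genus-one quasi-alternating (thin) knot, so $CFK^\infty$ is determined by the Alexander polynomial $\Delta$ and signature $\sigma=0$. I would compute $\Delta_{P(n,-n,2k)}$ and compare it with the $T_{2,n}\#T_{2,-n}$ case, extracting the invariant as a function of $n$ that is injective in $n$.

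The main obstacle is this third step. I must pin down exactly which knot invariant the Eli--Hom--Lidman distinguishing argument uses, and verify that it depends only on $n$ for the whole pretzel family, independently of $k$. I will first try the thin-knot route, since the complex is then determined by $\Delta$ and $\sigma$. If that fails, I will track the invariant directly through the surgery exact triangle for the twisting unknot.
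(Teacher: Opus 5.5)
\textbf{Verdict: there is a genuine gap in your third step.} Your first two steps are fine and match the paper: you realise the diagram as $CH^+$ on a one-band ribbon disc complement of $P(n,-n,2k)$, and the homeomorphism to $\R^4$ is in any case part of \cite{elihomlidman}.

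The third step is where the whole distinctness statement lives, and it does not yet contain a working argument.
\begin{itemize}
\item \emph{The invariant you guess cannot work.} $V_0$, and the $d$-invariants of surgeries on $K$, are the same for every slice knot as for the unknot. They can therefore detect neither exoticness nor $n$. What the paper uses is \cite[Theorem 1.1]{elihomlidman}. Part (1) gives exoticness of $\mcR_K$ for the nontrivial slice knot $K=P(n,-n,2k)$. Part (2) distinguishes $\mcR_K$ from $\mcR_{K'}$ when the maximal Maslov gradings in which $\widehat{HFK}_{\mathrm{red}}$ is nontrivial differ.
\item \emph{Linking number zero does not force invariance.} A twist along a linking-number-zero unknot does not preserve knot Floer homology in general. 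Twisting the unknot along the other component of the Whitehead link produces the trefoil and the figure-eight.
\item \emph{Mutation does not help.} It relates $P(n,-n,2k)$ to $P(n,2k,-n)$, not to the case $k=0$.
\end{itemize}
The missing idea is the specific band structure. $T_{2,n}\#T_{2,-n}$ is a band sum of a split two-component unlink, and $P(n,-n,2k)$ is obtained by adding $k$ full twists to that band. Hedden--Watson \cite[Theorem 1]{heddenwatson} then gives $\widehat{HFK}(P(n,-n,2k))$ independent of $k$. Thinness (\cite{varvarezos} for $k>0$, mirroring for $k<0$) then lets you replace $\widehat{HFK}_{\mathrm{red}}$ by $\widehat{HFK}$ in the top-grading comparison. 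The top Maslov grading is $n-1$, as for $T_{2,n}\#T_{2,-n}$.

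Your thin-knot fallback could be turned into a legitimate alternative to Hedden--Watson, but as written it rests on false or unsupported claims.
\begin{itemize}
\item \emph{The knots are not genus one.} A single crossing change between the two antiparallel band edges changes $k$ by one. Its oriented smoothing is isotopic to the split unlink, so the skein relation shows $\Delta_{P(n,-n,2k)}=\Delta_{T_{2,n}}(t)^2$ for all $k$. Hence the genus is $n-1$, and this is exactly where the $n$-dependence lives. If the knots really were genus-one thin slice knots, the top grading would be $1$ for every $n$ and nothing would be distinguished.
\item \emph{Quasi-alternating for all $k\neq0$ is unsupported.} What you actually need is thinness for every $k$, which is a genuine input.
\end{itemize}
Given thinness and $\tau=0$, $\widehat{HFK}$ is supported on $M=A$, with ranks equal to the absolute values of the coefficients of $\Delta$. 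So it is $k$-independent with top grading $n-1$, and \cite[Theorem 1.1(2)]{elihomlidman} finishes. This route trades Hedden--Watson for an Alexander polynomial computation, but it still needs the same thinness result the paper uses.
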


We prove this by considering the knot Floer homology of the pretzel knot $P(n,-n,2k)$ and by using the results of Eli, Hom, and Lidman \cite[Theorem 1.1]{elihomlidman}.
However, we cannot use the same tools to distinguish $\mcR_{n,k}$ and $\mcR_{n',k'}$ if $n=n'$. This is because the knot $P(n,-n,2k)$ has the same knot Floer homology for all $k\in\Z$.

\begin{figure}[ht]
    \centering
    \begin{tikzpicture}
        \node[anchor=center] at (0,0) 
        {\includegraphics[scale=0.4]{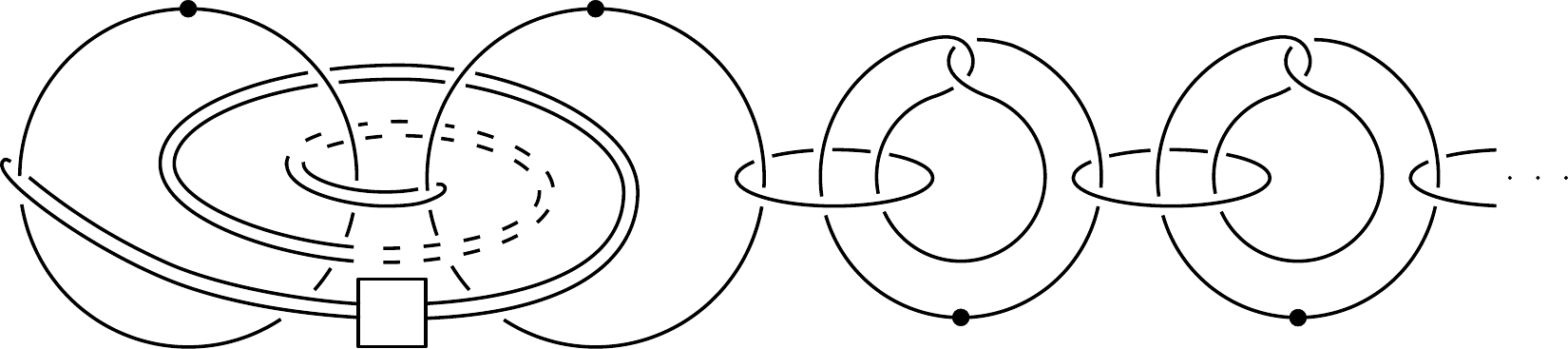}};
        \node at (-2.8,1) {$0$};
        \node at (-2.8,-1) {$k$};
        \draw[decorate, decoration={brace}] (-1,-1.4) -- (-4.5,-1.4); 
        \node at (-2.75,-2) {$\dfrac{n-1}{2}$ full windings};
        \node at (0.1,0.4) {$0$};
        \node at (2.5,0.4) {$0$};
        \node at (4.9,0.4) {$0$};
    \end{tikzpicture}
    \caption{A Kirby diagram for $\mcR_{n,k}$. The box represents $k$ full twists in the two parallel strands of the $2$-handle.}
    \label{fig:Rnk}
\end{figure}

\subsection*{Note}
We have been informed that the Kirby diagrams in \cref{fig:Rn1} were also independently obtained by the authors of \cite{elihomlidman} in unpublished work. 

\subsection*{Acknowledgments} 
This work was done during the 2026 Vacation Scholarship Program with the School of Mathematics and Statistics at The University of Melbourne. I am deeply grateful to my supervisor, Arunima Ray, for her valuable guidance, many helpful discussions, and for thoughtful feedback on earlier drafts of this paper. I would like to extend my sincere thanks to Sean Eli, Jennifer Hom, and Tye Lidman for insightful comments, suggestions, and a discussion on an earlier draft. I also thank the coordinators of the Vacation Scholarship Program for their support and the opportunity to work on this research project.

\section{Proofs}

\begin{proof}[Proof of \cref{thm:1}]
    We first construct the family $\{\mcR_n\}_{n=3,5,7,...}$ represented by \cref{fig:Rn1}. We will only show the explicit construction for $\mcR_5$, and then generalise this to all $n\ge3$ and odd. \cref{fig:T1Knots_a} is the torus knot $T_{2,5}$. The knot $T_{2,-5}$ is the mirror image of $T_{2,5}$, and so the connected sum $T_{2,5}\#T_{2,-5}$ is the knot shown in \cref{fig:T1Knots_b} and \cref{fig:T1Knots_c}. Since torus knots are invertible \cite[Proposition 3.27]{burdezieschang}, we do not need to consider orientations when taking the connected sum. 
    
    \begin{figure}[ht]
        \centering
        \begin{subfigure}[b]{0.25\textwidth}
            \centering
            \begin{tikzpicture}
                \node[anchor=center] at (0,0) {\includegraphics[scale=0.4]{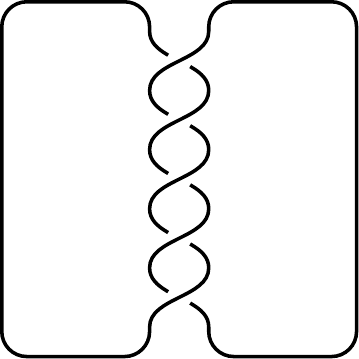}};
            \end{tikzpicture}
            \caption{}
            \label{fig:T1Knots_a}
        \end{subfigure}
        \begin{subfigure}[b]{0.40\textwidth}
            \centering
            \begin{tikzpicture}
                \node[anchor=center] at (0,0) {\includegraphics[scale=0.4]{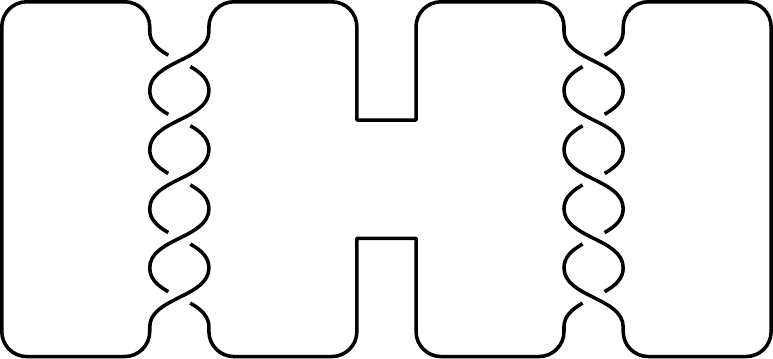}};
            \end{tikzpicture}
            \caption{}
            \label{fig:T1Knots_b}
        \end{subfigure}
        \begin{subfigure}[b]{0.25\textwidth}
            \centering
            \begin{tikzpicture}
                \node[anchor=center] at (0,0) {\includegraphics[scale=0.4]{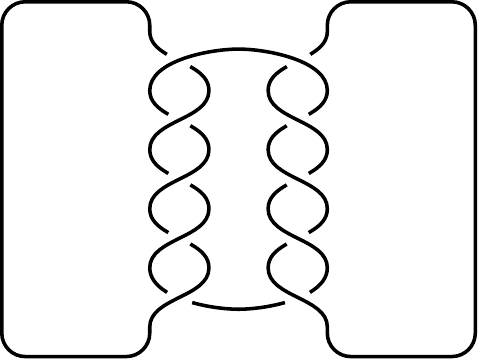}};
            \end{tikzpicture}
            \caption{}
            \label{fig:T1Knots_c}
        \end{subfigure}
        \caption{(a) The torus knot $T_{2,5}$. (b) and (c) The connected sum $T_{2,5}\#T_{2,-5}$.}
        \label{fig:T1Knots}
    \end{figure}

    A ribbon disc for the knot $T_{2,5}\#T_{2,-5}$ can now be obtained by doing four \emph{ribbon moves} (see \cite[Section 6.2]{gompfstipsicz}) as shown in \cref{fig:T1Ribbon_a}. The fine bands represent the ribbon moves used to obtain the specific ribbon disc, and the dot indicates that we are taking the ribbon disc's complement \cite[Section 6.2]{gompfstipsicz}. To get the Kirby diagram of the ribbon disc complement, we follow the procedure described in \cite[Section 1.4]{akbulut}. That is, we perform a band sum and add a $0$-framed $2$-handle along each ribbon move, as shown in \cref{fig:T1Ribbon_b}. This turns the knot into five unlinked dotted circles. \cref{fig:T1Ribbon_c} is equivalent to \cref{fig:T1Ribbon_b}, and represents the Kirby diagram for a ribbon disc complement of $T_{2,5}\#T_{2,-5}$. 

    \begin{figure}[ht]
        \centering
        \begin{subfigure}[b]{0.30\textwidth}
            \centering
            \begin{tikzpicture}
                \node[anchor=center, inner ysep=15pt] at (0,0) {\includegraphics[scale=0.4]{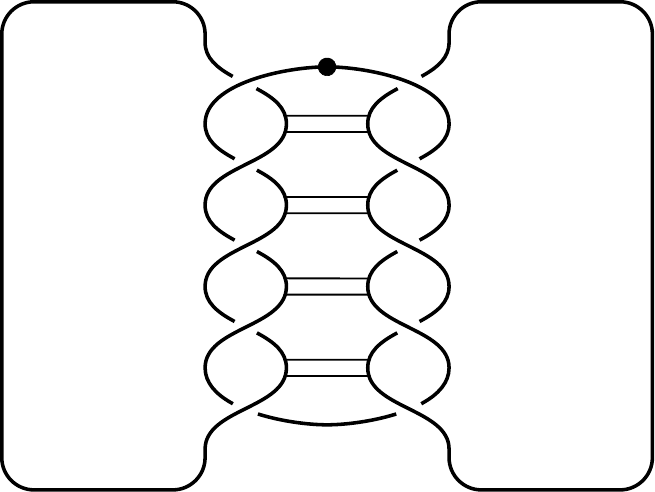}};
            \end{tikzpicture}
            \caption{}
            \label{fig:T1Ribbon_a}
        \end{subfigure}
        \begin{subfigure}[b]{0.30\textwidth}
            \centering
            \begin{tikzpicture}
                \node[anchor=center, inner ysep=15pt] at (0,0) {\includegraphics[scale=0.4]{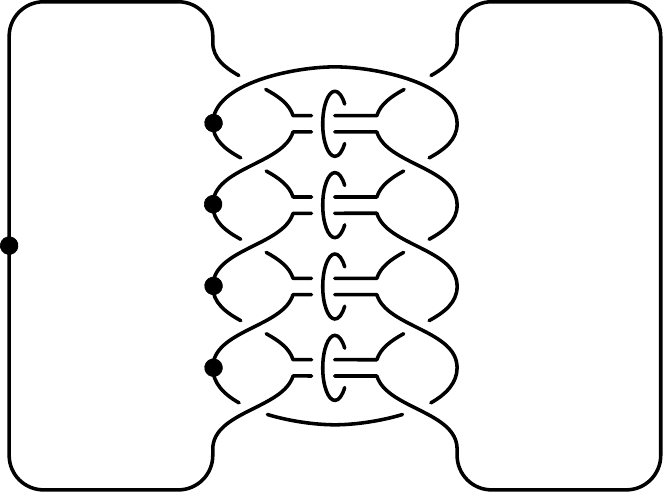}};
            \end{tikzpicture}
            \caption{}
            \label{fig:T1Ribbon_b}
        \end{subfigure}
        \begin{subfigure}[b]{0.30\textwidth}
            \centering
            \begin{tikzpicture}
                \node[anchor=center] at (0,0) {\includegraphics[scale=0.4]{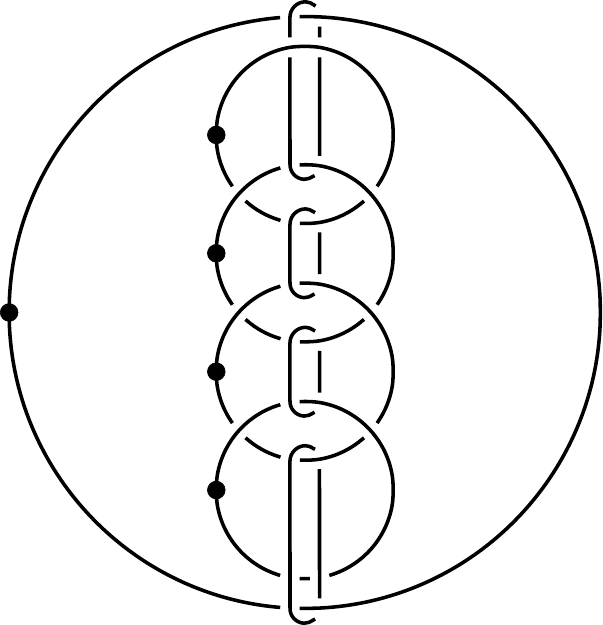}};
                \node at (0.3,0.4) {$0$};
                \node at (0.3,1.4) {$0$};
                \node at (0.3,-0.4) {$0$};
                \node at (0.3,-1.4) {$0$};
            \end{tikzpicture}
            \caption{}
            \label{fig:T1Ribbon_c}
        \end{subfigure}
        \caption{Ribbon moves for a ribbon disc complement of the knot $T_{2,5}\#T_{2,-5}$. All $2$-handles in (b) are $0$-framed.}
        \label{fig:T1Ribbon}
    \end{figure}

    We will now obtain an exotic $\R^4$ by attaching a Casson handle to the ribbon disc complement shown in \cref{fig:T1Ribbon_c}. Casson handles are built using layers of self-plumbed $2$-handles. In general, Casson handles can have positive and negative self-plumbings, and also branching \cite[Section 6.1]{gompfstipsicz}. We only consider the simplest positive Casson handle $CH^+$, which has a single positive self-plumbing at each stage. For some other Casson handles, see \cref{rem:cassonhandles}. 

    To construct the exotic $\R^4$, we now attach the Casson handle $CH^+$ along a $0$-framed meridian of the ribbon disc that is removed (as described in \cite{elihomlidman} and \cite{gompfstipsicz}). This attachment is shown in \cref{fig:R51}. By \cite[Corollary 1.2]{elihomlidman}, the manifold $\mcR_5$, obtained from \cref{fig:R51} by removing the boundary, is an exotic $\R^4$.

    \begin{figure}[ht]
        \centering
        \begin{tikzpicture}
            \node[anchor=center] at (0,0) 
            {\includegraphics[scale=0.4]{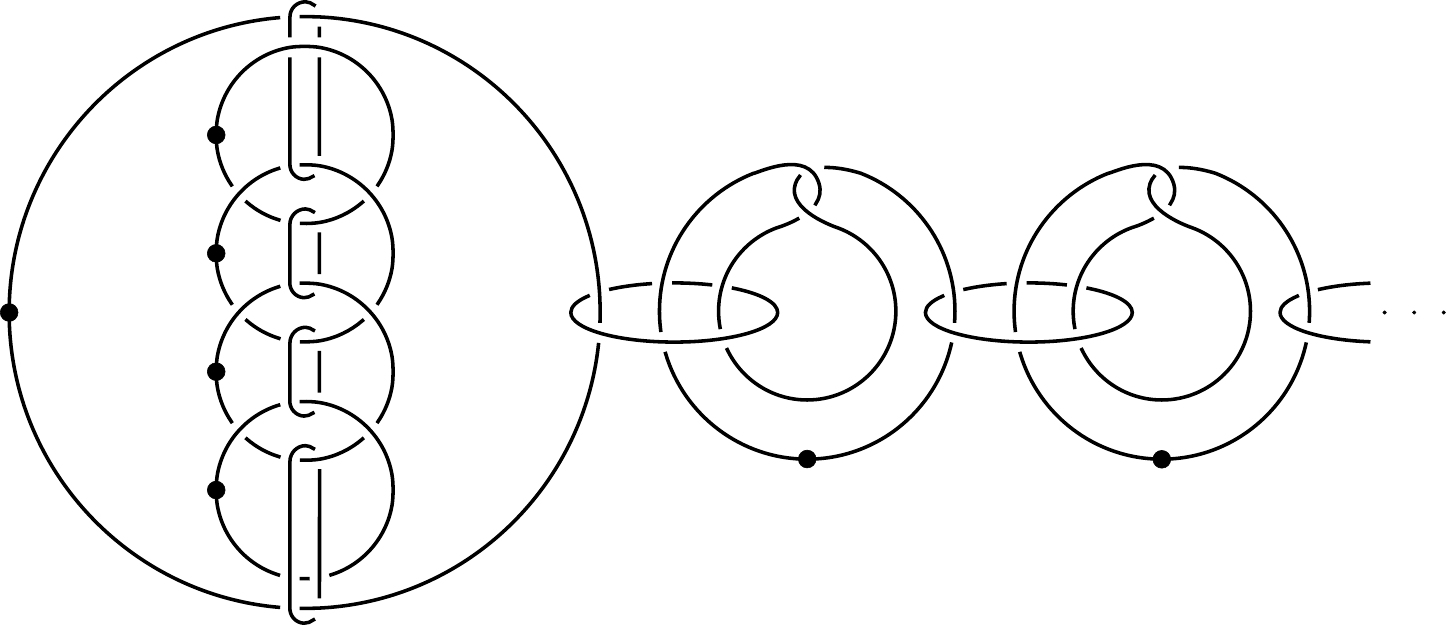}};
            \node at (-2.55,0.4) {$0$};
            \node at (-2.55,1.4) {$0$};
            \node at (-2.55,-0.4) {$0$};
            \node at (-2.55,-1.4) {$0$};
            \node at (-0.6,0.4) {$0$};
            \node at (1.8,0.4) {$0$};
            \node at (4.2,0.4) {$0$};
        \end{tikzpicture}
        \caption{A Kirby diagram for $\mcR_5$.}
        \label{fig:R51}
    \end{figure}

    To see how this diagram generalises for all $n\ge3$ and odd, we notice that the knot $T_{2,n}$ will differ from \cref{fig:T1Knots_a} by having $n$ half-twists, and so just like in \cref{fig:T1Ribbon_a}, we will need to do $n-1$ ribbon moves. As we saw in \cref{fig:T1Ribbon_b}, this will produce $n$ unlinked dotted circles, with $n-1$ $0$-framed $2$-handles as a result of the ribbon moves. Similar to \cref{fig:T1Ribbon_c}, this will yield the ribbon disc complement shown in \cref{fig:Rn1}. Therefore, the Kirby diagrams in \cref{fig:Rn1} for $n\ge3$ and odd represent a family of manifolds $\{\mcR_n\}_{n=3,5,7,...}$ obtained by attaching the Casson handle $CH^+$ to a ribbon disc complement of the knot $T_{2,n}\#T_{2,-n}$ and removing the boundary. It follows from \cite[Corollary 1.2]{elihomlidman} that $\{\mcR_n\}_{n=3,5,7,...}$ is a family of pairwise nondiffeomorphic exotic $\R^4$'s. 

    We now construct the second family $\{\mcR_n'\}_{n=3,5,7,...}$ by using different ribbon moves for the knot $T_{2,n}\#T_{2,-n}$. We will again start with the case of $n=5$. The knot in \cref{fig:T2Knots_a} is an equivalent diagram of the torus knot $T_{2,5}$. Similar to before, we form the connected sum $T_{2,5}\#T_{2,-5}$ as shown in \cref{fig:T2Knots_b} and \cref{fig:T2Knots_c}. 

    \begin{figure}[ht]
        \centering
        \begin{subfigure}[b]{0.3\textwidth}
            \centering
            \begin{tikzpicture}
                \node[anchor=center] at (0,0) {\includegraphics[scale=0.4]{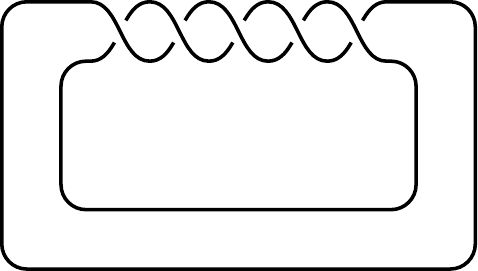}};
            \end{tikzpicture}
            \caption{}
            \label{fig:T2Knots_a}
        \end{subfigure}
        \begin{subfigure}[b]{0.5\textwidth}
            \centering
            \begin{tikzpicture}
                \node[anchor=center] at (0,0) {\includegraphics[scale=0.4]{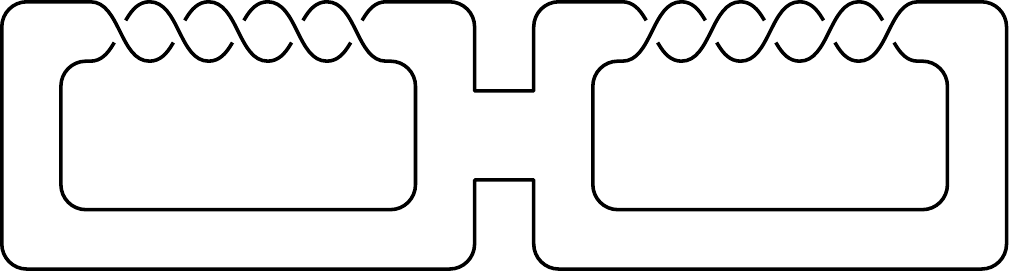}};
            \end{tikzpicture}
            \caption{}
            \label{fig:T2Knots_b}
        \end{subfigure}
        \begin{subfigure}[b]{0.4\textwidth}
            \centering
            \begin{tikzpicture}
                \node[anchor=center] at (0,0) {\includegraphics[scale=0.4]{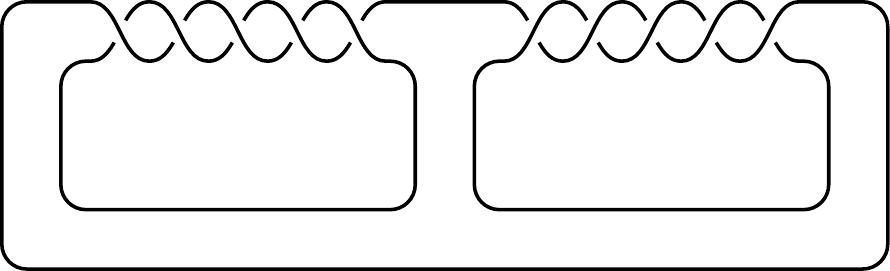}};
            \end{tikzpicture}
            \caption{}
            \label{fig:T2Knots_c}
        \end{subfigure}
        \caption{(a) The torus knot $T_{2,5}$. (b) and (c) The connected sum $T_{2,5}\#T_{2,-5}$.}
        \label{fig:T2Knots}
    \end{figure}

    To obtain a ribbon disc complement, we now do a single ribbon move as shown in \cref{fig:T2Ribbon_a} and \cref{fig:T2Ribbon_b}. \cref{fig:T2Ribbon_c} then follows from \cref{fig:T2Ribbon_b} by isotoping the bottom-most dotted strand to the top of the diagram. We can then unwind the top dotted circle from the bottom dotted circle in \cref{fig:T2Ribbon_c}. This results in the $2$-handle being wound around both dotted circles, as seen in \cref{fig:T2Ribbon_d}. Thus, \cref{fig:T2Ribbon_d} represents a ribbon disc complement of the knot $T_{2,5}\#T_{2,-5}$. 

    \begin{figure}[ht]
        \centering
        \begin{subfigure}[b]{0.4\textwidth}
            \centering
            \begin{tikzpicture}
                \node[anchor=center] at (0,0) {\includegraphics[scale=0.4]{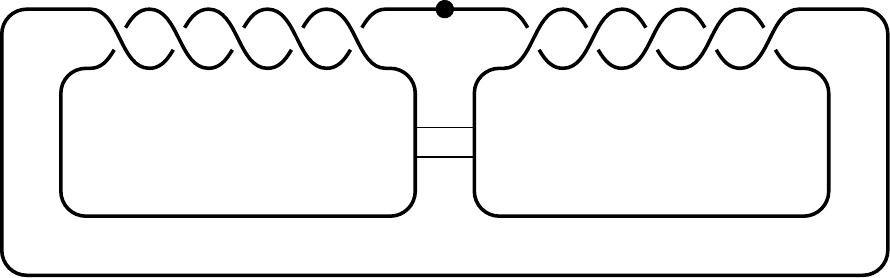}};
            \end{tikzpicture}
            \caption{}
            \label{fig:T2Ribbon_a}
        \end{subfigure}
        \begin{subfigure}[b]{0.4\textwidth}
            \centering
            \begin{tikzpicture}
                \node[anchor=center] at (0,0) {\includegraphics[scale=0.4]{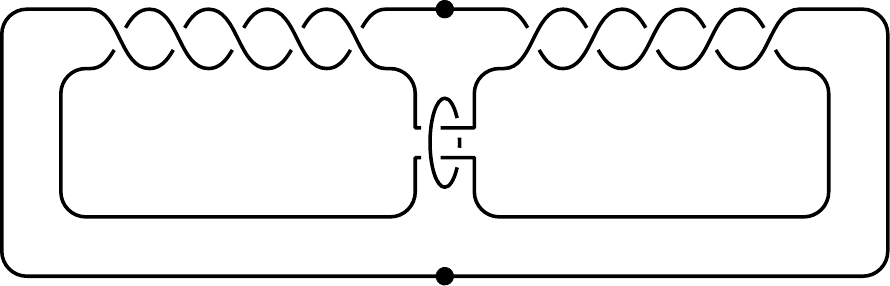}};
                \node at (0,0.5) {$0$};
            \end{tikzpicture}
            \caption{}
            \label{fig:T2Ribbon_b}
        \end{subfigure}
        \par\medskip
        \begin{subfigure}[b]{0.4\textwidth}
            \centering
            \begin{tikzpicture}
                \node[anchor=center] at (0,0) {\includegraphics[scale=0.4]{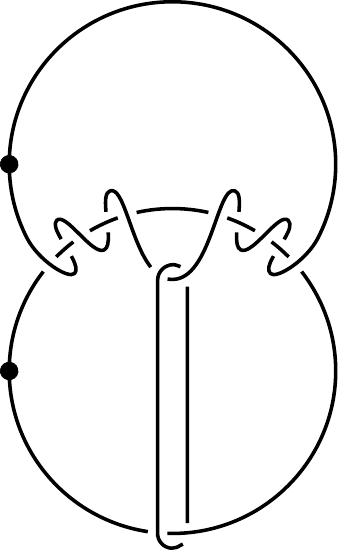}};
                \node at (0.3,-0.8) {$0$};
            \end{tikzpicture}
            \caption{}
            \label{fig:T2Ribbon_c}
        \end{subfigure}
        \begin{subfigure}[b]{0.4\textwidth}
            \centering
            \begin{tikzpicture}
                \node[anchor=center] at (0,0) {\includegraphics[scale=0.4]{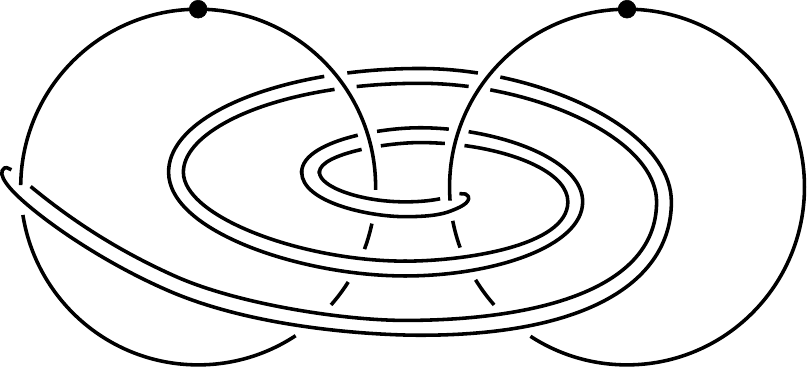}};
                \node at (0.05,1) {$0$};
            \end{tikzpicture}
            \caption{}
            \label{fig:T2Ribbon_d}
        \end{subfigure}
        \caption{Ribbon moves for a ribbon disc complement of the knot $T_{2,5}\#T_{2,-5}$.}
        \label{fig:T2Ribbon}
    \end{figure}

    As before, by \cite[Corollary 1.2]{elihomlidman}, the manifold $\mcR_5'$, obtained by attaching the Casson handle $CH^+$ to \cref{fig:T2Ribbon_d} and then removing the boundary, is an exotic $\R^4$ and is shown in \cref{fig:R52}.

    \begin{figure}[ht]
        \centering
        \begin{tikzpicture}
            \node[anchor=center] at (0,0) 
            {\includegraphics[scale=0.4]{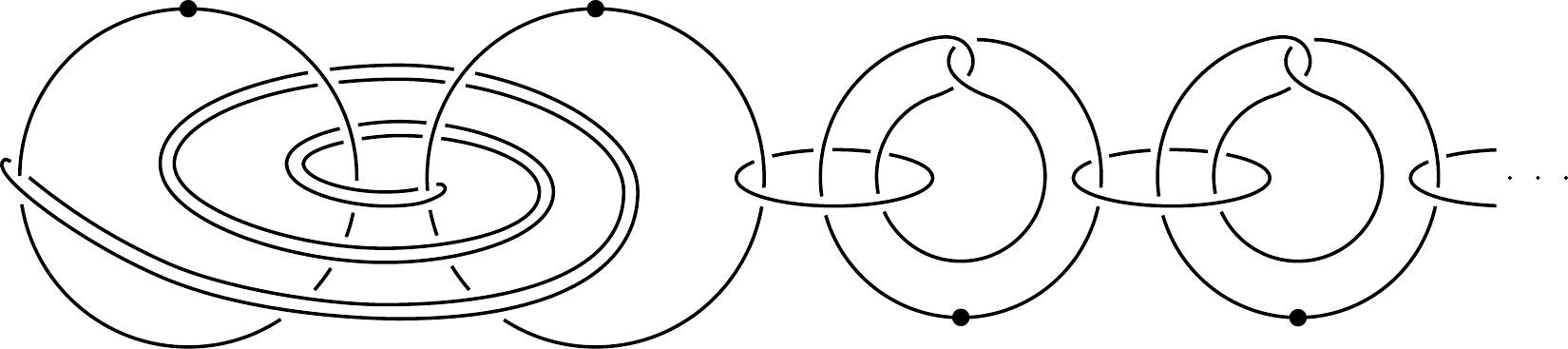}};
            \node at (-2.8,1) {$0$};
            \node at (0.1,0.4) {$0$};
            \node at (2.5,0.4) {$0$};
            \node at (4.9,0.4) {$0$};
        \end{tikzpicture}
        \caption{A Kirby diagram for $\mcR_5'$.}
        \label{fig:R52}
    \end{figure}

    To construct the Kirby diagram for all $\mcR_n'$ for $n\ge3$ and odd, we again notice that the torus knot $T_{2,n}$ will be the same as \cref{fig:T2Knots_a} except with $n$ half-twists. After forming the connected sum $T_{2,n}\#T_{2,-n}$ and performing the same single ribbon move, we will arrive at a diagram similar to \cref{fig:T2Ribbon_c} in which the top dotted circle would be wound $(n-1)/2$ times around the bottom dotted circle. After unwinding, we will get the ribbon disc complement shown in \cref{fig:Rn2}, in which the $2$-handle has a total of $(n-1)/2$ full windings around the inner strands of the two dotted circles. An alternative description of the diagram in \cref{fig:Rn2} is that the attaching circle of the $2$-handle has $2n+2$ crossings with the left dotted circle, and $2n$ crossings with the right dotted circle. The case of $n=3$ gives the same ribbon disc complement provided in \cite[Figure 12.35]{gompfstipsicz}. 
    We have now shown that the Kirby diagrams in \cref{fig:Rn2} for $n\ge3$ and odd represent a family of manifolds $\{\mcR_n'\}_{n=3,5,7,...}$ obtained by attaching the Casson handle $CH^+$ to a ribbon disc complement of the knot $T_{2,n}\#T_{2,-n}$. Similar to before, \cite[Corollary 1.2]{elihomlidman} implies that $\{\mcR_n'\}_{n=3,5,7,...}$ is a family of pairwise nondiffeomorphic exotic $\R^4$'s.
\end{proof}

\begin{remark}
    As mentioned in \cite{elihomlidman}, the results of \cref{thm:1} would still hold if we replaced the Casson handle $CH^+$ in \cref{fig:Rn1} and \cref{fig:Rn2} with the simplest negative Casson handle $CH^-$, which has a single negative self-plumbing at each stage. This means that we will have a negative Whitehead double instead of a positive Whitehead double at each stage of the Casson handle in \cref{fig:Rn1} and \cref{fig:Rn2}. We can also obtain an exotic $\R^4$ by attaching a Casson handle that has finitely many self-plumbings of one sign and no branching to any of the ribbon disc complements obtained above (see \cite{elihomlidman}).
    \label[remark]{rem:cassonhandles}
\end{remark}

\begin{proof}[Proof of \cref{thm:2}]
    We first show that the ribbon disc complements in \cref{fig:Rn1} and \cref{fig:Rn2} are diffeomorphic. To do this, we make use of the two moves shown in \cref{fig:Moves}. Both of these are sequences of Kirby moves and can be obtained using handle slides and cancellations. The first move, in \cref{fig:Move1}, is the result of sliding one of the $2$-handles over the other and then cancelling the $1$-handle. The second move, in \cref{fig:Move2}, is the result of sliding one of the $1$-handles over the other and then cancelling the $0$-framed $2$-handle.      

    \begin{figure}[ht]
        \centering
        \begin{subfigure}[b]{0.45\textwidth}
            \centering
            \begin{tikzpicture}
                \node[anchor=center] at (0,0) {\includegraphics[scale=0.4]{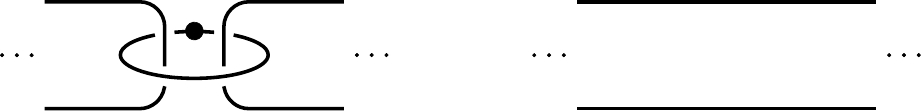}};
                \node at (-2.5,0.6) {$0$};
                \node at (-1.1,0.6) {$0$};
                \node at (1.8,0.6) {$0$};
                \draw[->] (-0.3,0) -- (0.3,0);
            \end{tikzpicture}
            \caption{}
            \label{fig:Move1}
        \end{subfigure}
        \begin{subfigure}[b]{0.45\textwidth}
            \centering
            \begin{tikzpicture}
                \node[anchor=center] at (0,0) {\includegraphics[scale=0.4]{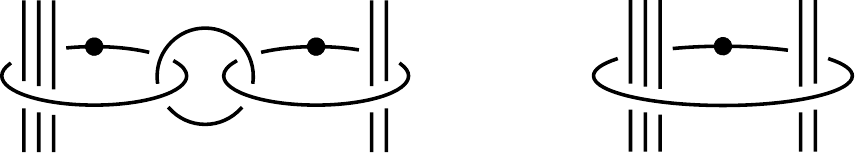}};
                \node at (-1.51,0.55) {$0$};
                \draw[->] (0.15,0) -- (0.85,0);
            \end{tikzpicture}
            \caption{}
            \label{fig:Move2}
        \end{subfigure}
        \caption{Diagrams of two moves that result from handle slides and cancellations. The vertical lines in \cref{fig:Move2} represent any $2$-handles that may be going through the dotted circles.} 
        \label{fig:Moves}
    \end{figure}
    
    As mentioned in \cref{rem:altdiagram}, we can represent $\mcR_n$ using the Kirby diagram in \cref{fig:Rn1Alt}. Thus, without the Casson handle attached, the ribbon disc complement in $\mcR_n$ can be represented by the diagram in \cref{fig:KirbyMoves1}. We first apply the move shown in \cref{fig:Move1} to the rightmost dotted circle in \cref{fig:KirbyMoves1}. This cancels the rightmost dotted circle and one of the adjacent $2$-handles, giving us \cref{fig:KirbyMoves2}. We then successively apply the move shown in \cref{fig:Move2} to each of the $2$-handles in \cref{fig:KirbyMoves2} except for the rightmost $2$-handle. This can be done, for example, in a counter-clockwise order in \cref{fig:KirbyMoves2}, and by applying isotopies to obtain the configuration of \cref{fig:Move2} at each stage. Doing this leaves us with only one $2$-handle as shown in \cref{fig:KirbyMoves3}. 

    \begin{figure}[ht]
        \centering
        \begin{subfigure}[b]{0.4\textwidth}
            \centering
            \begin{tikzpicture}
                \node[anchor=center] at (0,0) {\includegraphics[scale=0.4]{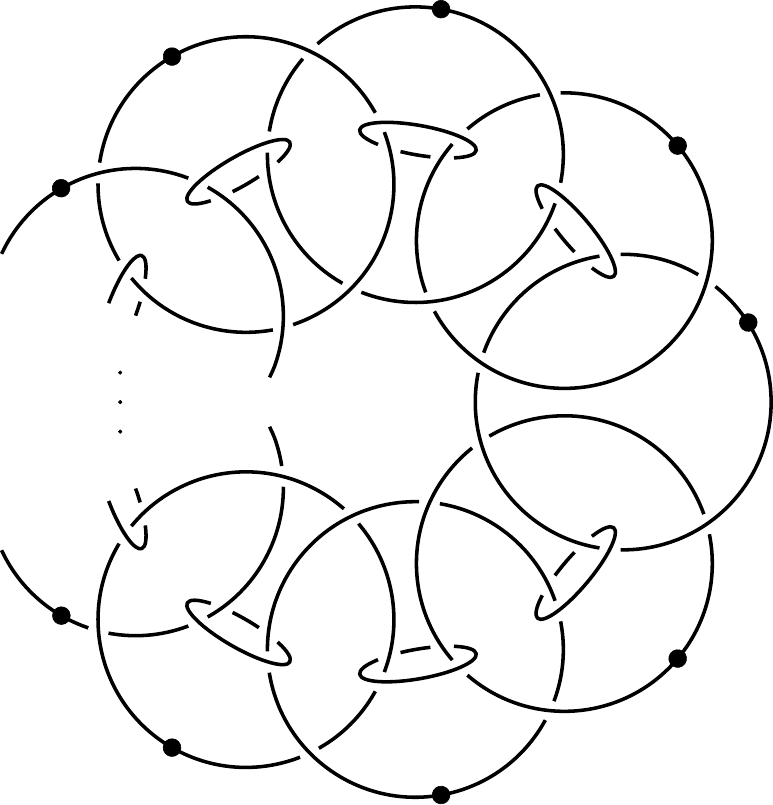}};
                \node at (-1.4+2.85,1.4) {$0$};
                \node at (-2.6+2.85,2.1) {$0$};
                \node at (-4+2.85,1.9) {$0$};
                \node at (-4.9+2.85,0.8) {$0$};
                \node at (-1.4+2.85,-1.4) {$0$};
                \node at (-2.6+2.85,-2.1) {$0$};
                \node at (-4+2.85,-1.9) {$0$};
                \node at (-4.9+2.85,-0.8) {$0$};
            \end{tikzpicture}
            \caption{}
            \label{fig:KirbyMoves1}
        \end{subfigure}
        \begin{subfigure}[b]{0.4\textwidth}
            \centering
            \begin{tikzpicture}
                \node[anchor=center] at (0,0) {\includegraphics[scale=0.4]{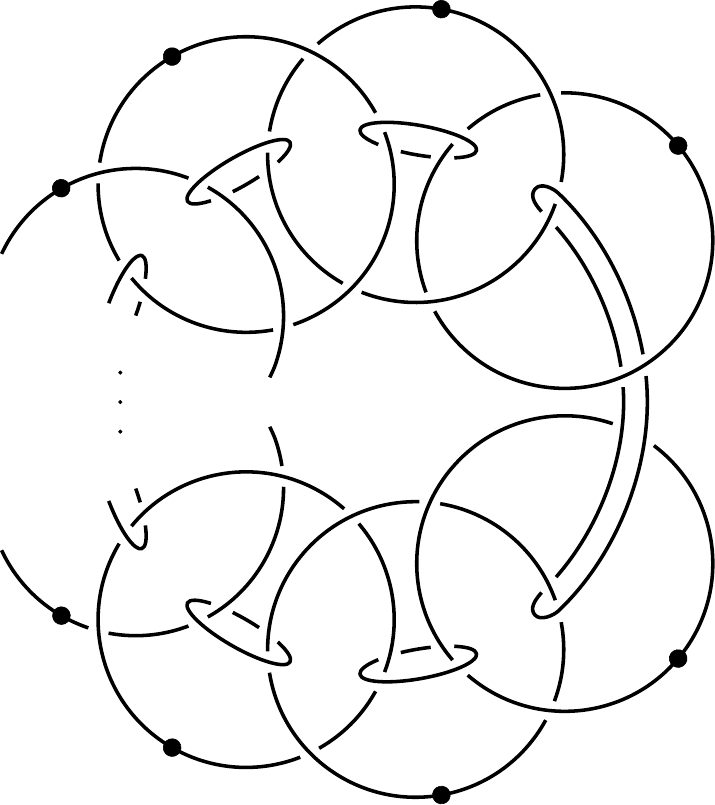}};
                \node at (0.45,2.1) {$0$};
                \node at (-0.95,1.9) {$0$};
                \node at (-1.85,0.8) {$0$};
                \node at (0.45,-2.1) {$0$};
                \node at (-0.95,-1.9) {$0$};
                \node at (-1.85,-0.8) {$0$};
                \node at (2.12,0) {$0$};
            \end{tikzpicture}
            \caption{}
            \label{fig:KirbyMoves2}
        \end{subfigure}
        \begin{subfigure}[b]{0.4\textwidth}
            \centering
            \begin{tikzpicture}
                \node[anchor=center] at (0,0) {\includegraphics[scale=0.4]{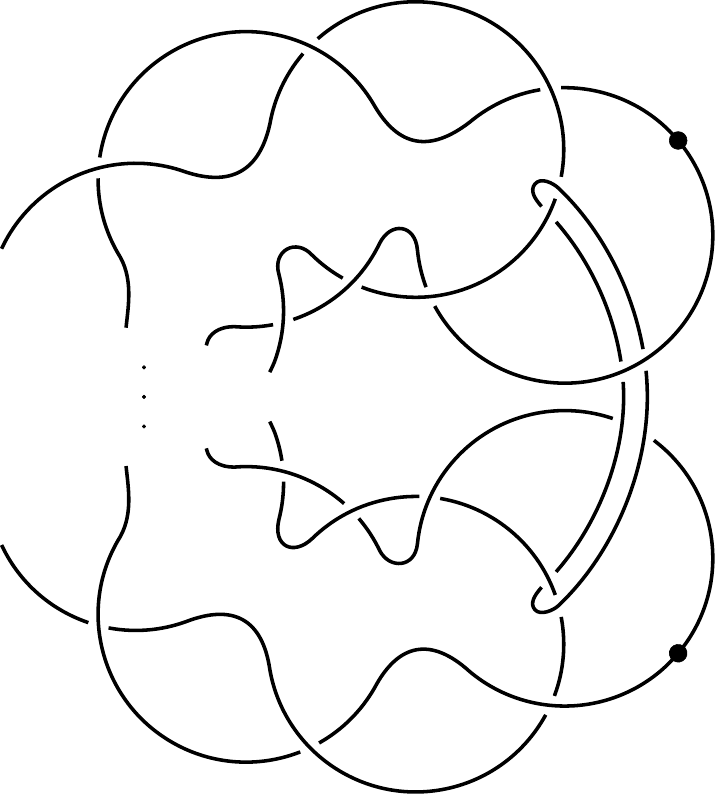}};
                \node at (2.12,0) {$0$};
            \end{tikzpicture}
            \caption{}
            \label{fig:KirbyMoves3}
        \end{subfigure}
        \caption{The result of Kirby moves, going from the disc complement of $\mcR_n$ to the disc complement of $\mcR_n'$. The diagram in (a) is the same as the disc complement in \cref{fig:Rn1Alt}, and contains $n$ dotted circles.}
        \label{fig:KirbyMoves}
    \end{figure}
    
    \cref{fig:Same} shows some equivalent ways to draw the Kirby diagram in \cref{fig:KirbyMoves3}. We have drawn the inner crossings of \cref{fig:KirbyMoves3} as the twists on the right side of \cref{fig:KirbyMoves4}, and the outer crossings as the twists on the left side. There are $n-2$ half-twists on both the left and the right side of \cref{fig:KirbyMoves4}, since there were $n-1$ dotted circles in \cref{fig:KirbyMoves2}. By bringing the $2$-handle to the inside of the diagram, we then obtain \cref{fig:KirbyMoves5}. As a result, there are now $n$ half-twists on both sides of the $2$-handle in \cref{fig:KirbyMoves5}. 

    \cref{fig:KirbyMoves5} is the general form of \cref{fig:T2Ribbon_c} for all $n\ge3$ and odd. Thus, in the same way that we went from \cref{fig:T2Ribbon_c} to \cref{fig:T2Ribbon_d}, we can now unwind the top dotted circle from the bottom dotted circle in \cref{fig:KirbyMoves5}, which gives us exactly the disc complement in $\mcR_n'$. That is, we obtain the Kirby diagram in \cref{fig:Rn2} but without the Casson handle attached. Since there is a sequence of Kirby moves connecting them, we conclude that the disc complements in $\mcR_n$ and $\mcR_n'$ are diffeomorphic. 

    \begin{figure}[ht]
        \centering
        \begin{subfigure}[b]{0.3\textwidth}
            \centering
            \begin{tikzpicture}
                \node[anchor=center] at (0,0) {\includegraphics[scale=0.4]{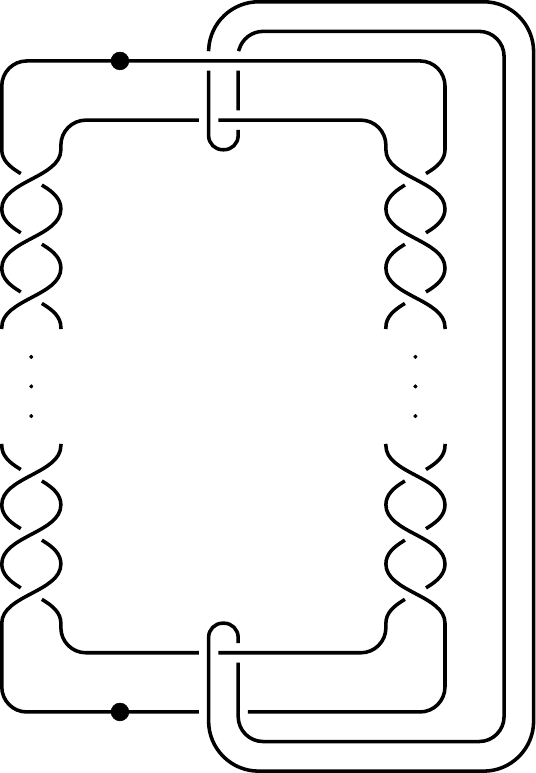}};
                \node at (-0.3,-1.3) {$0$};
            \end{tikzpicture}
            \caption{}
            \label{fig:KirbyMoves4}
        \end{subfigure}
        \begin{subfigure}[b]{0.6\textwidth}
            \centering
            \begin{tikzpicture}
                \node[anchor=center] at (0,0) {\includegraphics[scale=0.4]{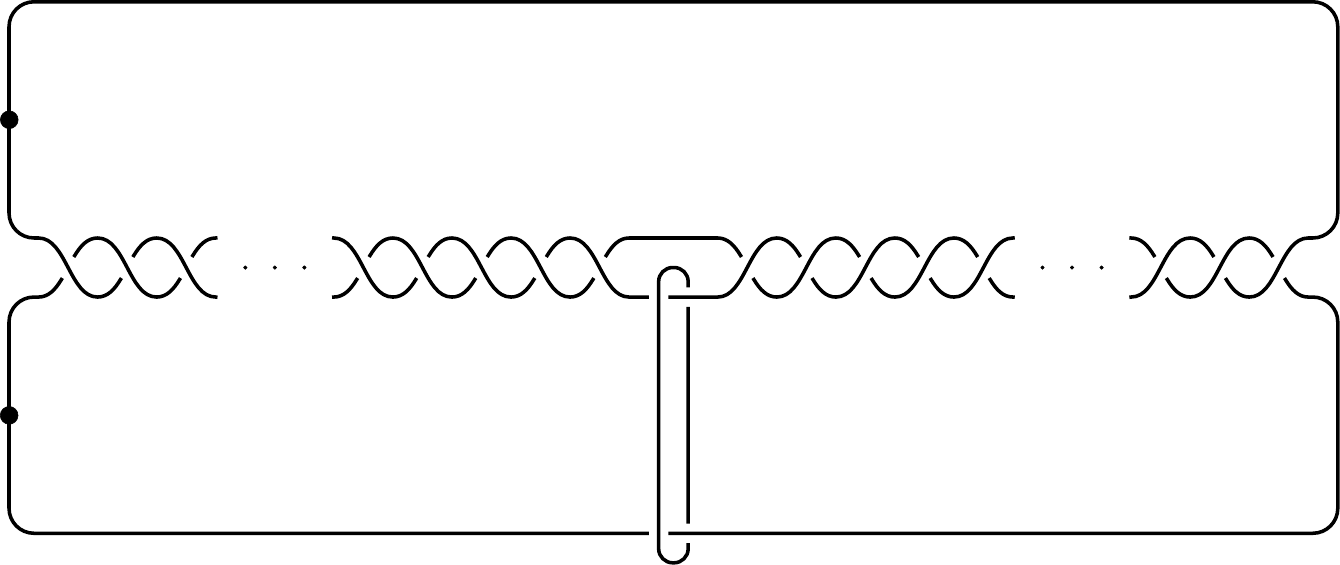}};
                \node at (0.3,-0.9) {$0$};
            \end{tikzpicture}
            \caption{}
            \label{fig:KirbyMoves5}
        \end{subfigure}
        \caption{Equivalent diagrams to \cref{fig:KirbyMoves3}. In (a), there are $n-2$ half-twists on both sides. In (b), there are $n$ half-twists on both sides.}
        \label{fig:Same}
    \end{figure}

    We now consider the attachment of the Casson handle. From \cref{fig:KirbyMoves}, we can see that a meridian of the ribbon disc in \cref{fig:KirbyMoves1} is mapped to a meridian of the ribbon disc in \cref{fig:KirbyMoves3} while staying in $S^3$ during the Kirby moves. Since the Casson handle can be attached to any meridian of the ribbon discs, we can extend the diffeomorphism that we obtained above between the disc complements to a diffeomorphism between $\mcR_n$ and $\mcR_n'$. Thus, for all $n\ge3$ and odd, $\mcR_n$ and $\mcR_n'$ are diffeomorphic. 
\end{proof}

\begin{proof}[Proof of \cref{thm:3}]
    We first show that \cref{fig:Rnk} can be obtained by attaching the Casson handle $CH^+$ to a ribbon disc complement of the three stranded pretzel knot $P(n,-n,2k)$. We will explicitly construct $\mcR_{5,k}$ and then generalise it for all $n\ge3$ and odd. The pretzel knot $P(5,-5,2k)$ is shown in \cref{fig:P1}. The box in \cref{fig:P1} represents $k$ full twists, which is equivalent to $2k$ half-twists as indicated in the notation $P(5,-5,2k)$. We will take $k$ full twists to mean $k$ positive full twists when $k>0$, and $|k|$ negative full twists when $k<0$. 

    Similar to the construction of $\mcR_n'$, we can obtain the Kirby diagram of a ribbon disc complement of $P(5,-5,2k)$ by doing a single ribbon move as shown in \cref{fig:P2} and \cref{fig:P3}. We then arrive at \cref{fig:P4} by moving the bottom-most dotted strand to the top in \cref{fig:P3}. This is the same as \cref{fig:T2Ribbon_c} but with $k$ full twists in the $2$-handle. Thus by unwinding and separating the two dotted circles, we will get the disc complement in \cref{fig:Rnk} for the $n=5$ case. 

    In the general case for $n\ge3$ and odd, the knot $P(n,-n,2k)$ will differ from \cref{fig:P1} by having $n$ positive half-twists on the left side and $n$ negative half-twists on the right side. By doing the same single ribbon move, we will obtain a diagram similar to \cref{fig:P4} where the top dotted circle has $(n-1)/2$ total windings around the bottom dotted circle. After unwinding, we will obtain the Kirby diagram for the disc complement shown in \cref{fig:Rnk}. Thus, $\mcR_{n,k}$ is obtained by attaching the Casson handle $CH^+$ to a ribbon disc complement of the knot $P(n,-n,2k)$ for $n\ge3$ and odd and $k\in\Z$, and then removing the boundary. Therefore, \cite[Theorem 1.1(1)]{elihomlidman} implies that $\mcR_{n,k}$ is an exotic $\R^4$ for all $n\ge3$ and odd, and $k\in\Z$.

    \begin{figure}[ht]
        \centering
        \begin{subfigure}[b]{0.4\textwidth}
            \centering
            \begin{tikzpicture}
                \node[anchor=center] at (0,0) {\includegraphics[scale=0.4]{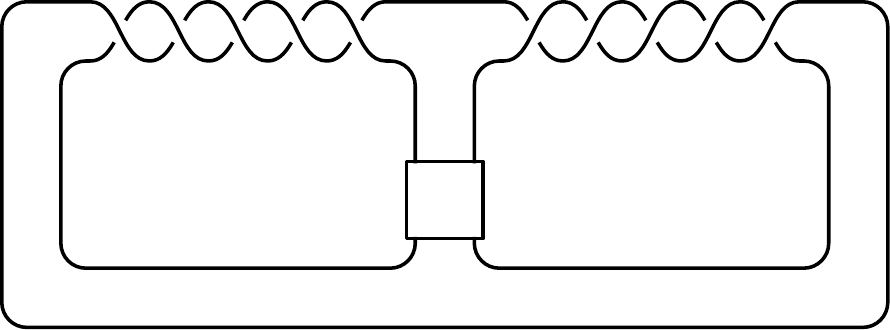}};
                \node at (0,-0.24) {$k$};
            \end{tikzpicture}
            \caption{}
            \label{fig:P1}
        \end{subfigure}
        \begin{subfigure}[b]{0.4\textwidth}
            \centering
            \begin{tikzpicture}
                \node[anchor=center] at (0,0) {\includegraphics[scale=0.4]{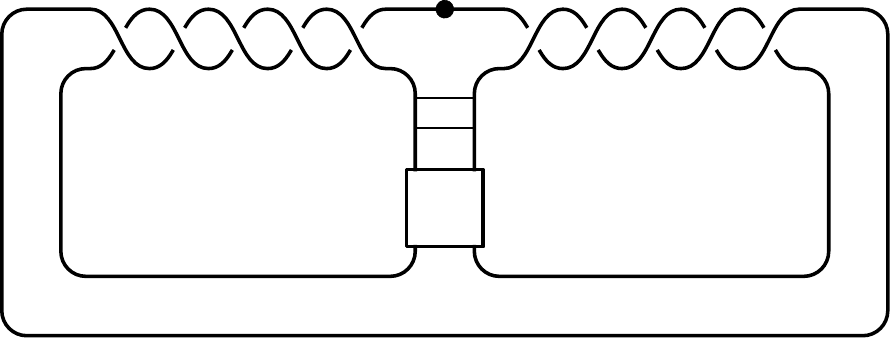}};
                \node at (0,-0.26) {$k$};
            \end{tikzpicture}
            \caption{}
            \label{fig:P2}
        \end{subfigure}
        \par\medskip
        \begin{subfigure}[b]{0.4\textwidth}
            \centering
            \begin{tikzpicture}
                \node[anchor=center] at (0,0) {\includegraphics[scale=0.4]{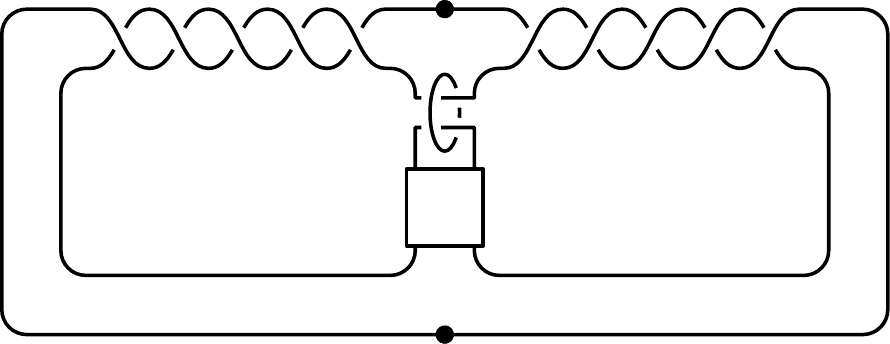}};
                \node at (0,-0.26) {$k$};
                \node at (0.35,0.4) {$0$};
            \end{tikzpicture}
            \caption{}
            \label{fig:P3}
        \end{subfigure}
        \begin{subfigure}[b]{0.4\textwidth}
            \centering
            \begin{tikzpicture}
                \node[anchor=center] at (0,0) {\includegraphics[scale=0.4]{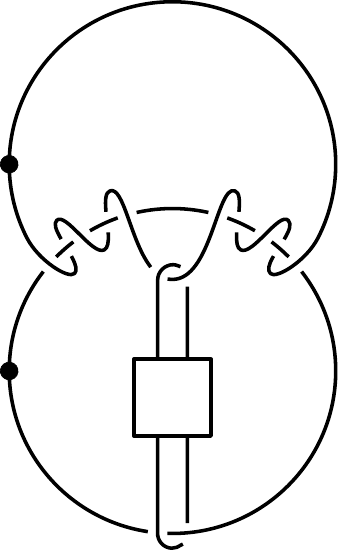}};
                \node at (0.03,-0.83) {$k$};
                \node at (0.3,-0.3) {$0$};
            \end{tikzpicture}
            \caption{}
            \label{fig:P4}
        \end{subfigure}
        \caption{Ribbon moves for a ribbon disc complement of the knot $P(5,-5,2k)$.}
        \label{fig:P}
    \end{figure}
    
    To prove the second part of the theorem, we first show that the knot Floer homology $\widehat{HFK}$ of the knot $P(n,-n,2k)$ for $n\ge 3$ and odd, and $k\in\Z$, does not depend on $k$. We use a result of Hedden and Watson, who showed that for a non-trivial knot $K$ that is obtained as a band sum of two unknots, and a knot $K_i$ obtained by adding $i$ full twists to the band, the knot Floer homologies $\widehat{HFK}(K)$ and $\widehat{HFK}(K_i)$ are identical \cite[Theorem 1]{heddenwatson}. The knot $T_{2,n}\#T_{2,-n}$, which is the same as $P(n,-n,0)$, can be drawn as a band sum of two unknots, as shown in \cref{fig:HFK1} for the case $n=5$. This is because by unwinding the band, as shown in \cref{fig:HFK2}, we get the same diagram as \cref{fig:T2Knots_c}. In general, for $n\ge3$ and odd, we can obtain a similar diagram to \cref{fig:HFK1} for $T_{2,n}\#T_{2,-n}$ in which the band has $(n-1)/2$ full windings around the two unknots. By comparing \cref{fig:P1} with \cref{fig:HFK2}, we can see that the knot $P(n,-n,2k)$ is obtained by adding $k$ full twists to the band in $T_{2,n}\#T_{2,-n}$. Thus, by \cite[Theorem 1]{heddenwatson}, we see that $\widehat{HFK}(P(n,-n,2k))\cong \widehat{HFK}(P(n,-n,2k'))$, for all $k, k'\in\Z$. We are grateful to Jennifer Hom for suggesting the use of the results of \cite{heddenwatson} in the above argument. 

    \begin{figure}[ht]
        \centering
        \begin{subfigure}[b]{0.4\textwidth}
            \centering
            \begin{tikzpicture}
                \node[anchor=center] at (0,0) {\includegraphics[scale=0.4]{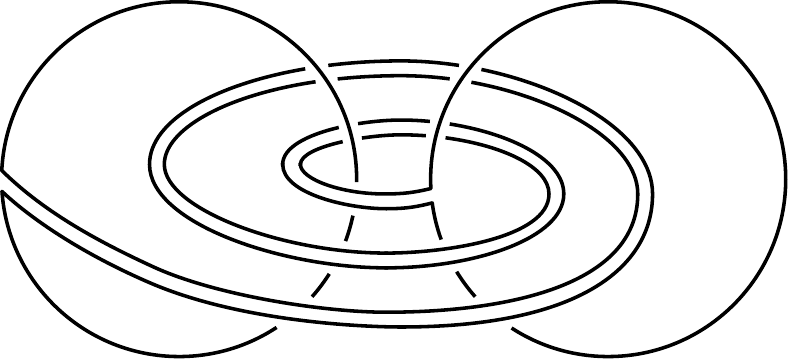}};
            \end{tikzpicture}
            \caption{}
            \label{fig:HFK1}
        \end{subfigure}
        \begin{subfigure}[b]{0.4\textwidth}
            \centering
            \begin{tikzpicture}
                \node[anchor=center] at (0,0) {\includegraphics[scale=0.4]{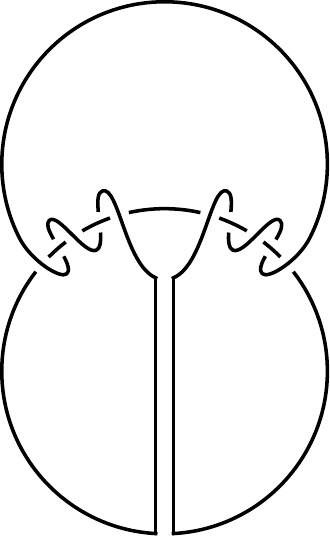}};
            \end{tikzpicture}
            \caption{}
            \label{fig:HFK2}
        \end{subfigure}
        \caption{(a) Diagram of $T_{2,5}\#T_{2,-5}$ as the band sum of two unknots. (b) An equivalent diagram to (a) which can be identified to be the same as \cref{fig:T2Knots_c}.}
        \label{fig:HFK}
    \end{figure}  

    Eli, Hom, and Lidman showed that if the knot Floer homology $\widehat{HFK}_{\text{red}}$ of two nontrivial slice knots has different maximal nontrivial Maslov gradings, then the exotic $\R^4$'s built on them by attaching $CH^+$ to a slice disc complement and removing the boundary are not diffeomorphic \cite[Theorem 1.1]{elihomlidman}. As mentioned in \cite{elihomlidman}, the maximal nontrivial Maslov grading of $\widehat{HFK}_{\text{red}}$ is the same as that of $\widehat{HFK}$ for nontrivial, thin, slice knots. The knot $T_{2,n}\#T_{2,-n}$ is thin because it is alternating \cite{ozsvathszabo2}. If $k>0$, then the knot $P(n,-n,2k)$ is thin for all $n\ge3$ and odd, as shown in \cite{varvarezos}. In the case $k<0$, we note that $P(n,-n,2k)$ is the mirror of $P(n,-n,-2k)$. Therefore since $P(n,-n,-2k)$ is thin, the knot $P(n,-n,2k)$ must also be thin (see \cite{ozsvathszabo1} and \cite[Proposition 7.1.2]{ozsvathstipsiczszabo}). It follows that $P(n,-n,2k)$ is thin for all $n\ge3$ and odd, and $k\in\Z$. This implies that its maximal nontrivial Maslov grading is the same for $\widehat{HFK}_{\text{red}}$ and $\widehat{HFK}$. Thus, since $P(n,-n,2k)$ has the same knot Floer homology $\widehat{HFK}$ for all $k\in\Z$, it follows that it has the same maximal nontrivial Maslov grading of $\widehat{HFK}_{\text{red}}$ for all $k\in\Z$. 

    As given in \cite[Remark 6.2]{elihomlidman}, the maximal nontrivial Maslov grading of $T_{2,n}\#T_{2,-n}$, which is the same knot as $P(n,-n,0)$, is $n-1$. Therefore, the maximal nontrivial Maslov grading of the knot $P(n,-n,2k)$ is also $n-1$. We can now use the results of Eli, Hom, and Lidman \cite[Theorem 1.1(2)]{elihomlidman} to conclude that if $n\ne n'$, then $\mcR_{n,k}$ and $\mcR_{n',k'}$ are not diffeomorphic for all $k,k'\in\Z$, since the knot Floer homology $\widehat{HFK}_{\text{red}}$ of the knots $P(n,-n,2k)$ and $P(n',-n',2k)$ has different maximal nontrivial Maslov gradings.
\end{proof}
\vspace{1pt}

\bibliographystyle{amsalpha}
\bibliography{bib}
\end{document}